\newtheorem{theorem}{Theorem}[section]
\newtheorem {prop}[theorem]{Proposition}
\newtheorem {cory}[theorem]{Corollary}
\newtheorem {lem}[theorem]{Lemma}
\theoremstyle{definition}
\newtheorem{df}[theorem]{Definition}
\theoremstyle{remark}
\newtheorem{rem}[theorem]{Remark}
\numberwithin{equation}{section}
\newcommand{\wass}{\widetilde{w}}
\newcommand{\wasp}{\widetilde{w}_p}
\newcommand{\wsecasp}{\widetilde{u}_p}
\newcommand{\hol}{\mathcal{H}ol}
\newcommand{\Cbb}{\mathbb C}
\newcommand{\Dbb}{\mathbb D}
\newcommand{\Nbb}{\mathbb N}
\newcommand{\zz}{{\mathbb Z}_+}
\newcommand{\Rbb}{\mathbb R}
\newcommand{\igl}{J}
\newcommand{\dom}{\mathcal{D}}
\newcommand{\rmax}{R}
\newcommand{\we}{w}
\newcommand{\wsec}{u}
\newcommand{\haw}{H_w}
\newcommand{\mm}{M}
\newcommand{\diff}{D}
\begin{document}

\title[Integration and differentiation operators]
{Integration and differentiation operators between growth spaces}

\author{Evgueni Doubtsov}

\address{St.~Petersburg Department
of V.A.~Steklov Mathematical Institute,
Fontanka 27, St.~Petersburg 191023, Russia}

\email{dubtsov@pdmi.ras.ru}

\thanks{This research was supported by the Russian Science Foundation (grant No. 18-11-00053).}

\subjclass[2010]{Primary 30D15; Secondary 26A51, 30H99, 47B38}

\date{}

\begin{abstract}
For arbitrary radial weights $w$ and $u$,
we study the integration operator between the growth spaces $H_w^\infty$
and $H_u^\infty$ on the complex plane. 
Also, we investigate the differentiation operator on the Hardy growth spaces $H_w^p$, $0<p<\infty$,
defined on the unit disk or on the complex plane.
As in the case $p=\infty$, the log-convex weights $w$ play a special role in the problems under consideration.
\end{abstract}

%%% ----------------------------------------------------------------------
\maketitle
%%% ----------------------------------------------------------------------

\section{Introduction}\label{s_int}

Let $\dom$ denote the complex plane $\Cbb$ or the unit disk $\Dbb$ of $\Cbb$.
Put $\rmax(\Dbb)=1$ and $\rmax(\Cbb)=\infty$.
We say that $\we: [0, \rmax(\dom))\to (0, +\infty)$
is a weight if $w$ is non-decreasing, continuous and unbounded.
If $\we$ is a weight on $[0, +\infty)$, then we additionally assume that $\log r = o (\log \we(r))$ as
$r\to +\infty$ to exclude the weights of polynomial growth.

Let $\hol(\dom)$ denote the space of holomorphic functions on $\dom$.
Given $0< p \le \infty$ and a weight $\we$ on $[0, \rmax(\dom))$, the Hardy growth space $\haw^p(\dom)$
consists of those $f\in\hol(\Dbb)$ for which
\[
\|f\|_{\haw^p} =\sup_{0\le r <\rmax(\dom)} \frac{\mm_p(f, r)}{\we(r)} < \infty,
\]
where
\begin{align*}
  \mm_p^p(f, r) &= \frac{1}{2\pi} \int_0^{2\pi} |f(r e^{i\theta})|^p \, d\theta, \quad 0<p<\infty,
\\
  \mm_\infty(f, r) &= \max_{0\le\theta<2\pi} |f(r e^{i\theta})|.
\end{align*}
Observe that different names have been used for $\haw^\infty(\dom)$.
In particular, $\haw^\infty(\dom)$ are often called weighted Banach spaces of analytic functions;
see, for example, \cite{AT17CM, AT17, BBT98, HL08}. Also, $\haw^\infty(\dom)$ are said to be growth spaces (see \cite{AD17}).

This paper is motivated by results obtained in \cite{AT17CM, AT17, HL08} on differentiation and integration
in the spaces $\haw^\infty(\dom)$.
In particular, we investigate the integration operator $\igl$ defined on $\hol(\dom)$ by the following identity:
\[
\igl f(z) = \int_0^z f(w)\, dw, \quad z\in \dom.
\]
The bounded and compact operators $\igl: H_\we^\infty(\Cbb) \to H_\wsec^\infty(\Cbb)$
are studied in \cite{AT17CM, AT17} under specific restrictions on $\we$ or for $\we=\wsec$.
We show that the problems in question have quite elementary solutions if
$\we$ is assumed to be equivalent to a power series with positive coefficients.
Combining this fact and a result from \cite{AD17}, we obtain, in particular, a
characterization of the corresponding compact operators for arbitrary weights $\we$ and $\wsec$; see Theorem~\ref{t_igl_cmp} below.

The main technical tool in the studies of various operators on $\haw^\infty(\dom)$ is the associated weight $\wass$ (see \cite{BBT98})
defined as
\[
\wass(t) = \sup\{\mm_\infty(f, t): \mm_\infty(f, r)\le \we(r)\ \textrm{for all}\ 0\le r<\rmax(\dom)\}.
\]

\begin{theorem}\label{t_igl_cmp}
Let $\we, \wsec$ be weights on $[0, \infty)$.
Then the following properties are equivalent:
\begin{itemize}
  \item [(i)] $$\lim_{r\to\infty} \frac{1}{\wsec(r)}\int_0^r \wass(t)\, dt  =0;$$
  \item [(ii)] $\igl: H_\we^\infty(\Cbb) \to H_\wsec^\infty(\Cbb)$ is a compact operator.
\end{itemize}
\end{theorem}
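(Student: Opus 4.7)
The overall strategy is to reduce to the case in which the source weight has nice structure (equivalent to a power series with nonnegative coefficients), which the introduction announces admits elementary treatment, and to carry out the reduction via the associated weight $\wass$, which always has this structure and satisfies $\|f\|_{H_\we^\infty} = \|f\|_{H_\wass^\infty}$.

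\emph{Direction (i) $\Rightarrow$ (ii).} The starting point is the radial pointwise estimate: for $f$ with $\|f\|_{H_\we^\infty}\le 1$, integration along the segment from $0$ to $re^{i\theta}$ gives
\[
|\igl f(re^{i\theta})| \le \int_0^r \mm_\infty(f,s)\,ds \le \int_0^r \wass(s)\,ds
\]
by the very definition of $\wass$, whence $\mm_\infty(\igl f,r)\le \int_0^r \wass(s)\,ds$ and $\igl$ is bounded into $H_\wsec^\infty$ under (i). For compactness I would take $\{f_n\}$ with $\|f_n\|_{H_\we^\infty}\le 1$, extract via Montel a subsequence converging locally uniformly to some $f\in H_\we^\infty$, and control $\|\igl(f_n-f)\|_{H_\wsec^\infty}$ by splitting the supremum at a large radius $R$: for $r>R$ the quotient $\wsec(r)^{-1}\int_0^r \wass$ is uniformly small by (i), while for $r\le R$ local uniform convergence of $\igl f_n\to \igl f$ together with positivity of $\wsec$ on $[0,R]$ finishes the job.

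\emph{Direction (ii) $\Rightarrow$ (i).} The associated weight $\wass$ is log-convex and hence equivalent to a power series with nonnegative coefficients. Since $H_\we^\infty$ and $H_\wass^\infty$ coincide as normed spaces, compactness of $\igl : H_\we^\infty\to H_\wsec^\infty$ is the same as compactness of $\igl : H_\wass^\infty\to H_\wsec^\infty$, and the elementary characterization announced in the introduction applies to the latter to yield (i). A transfer result from \cite{AD17} is invoked to identify the two compactness properties on an \emph{arbitrary} weight $\we$ with its regularization $\wass$.

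\emph{Main obstacle.} The core of the argument is the elementary step for power-series weights. Its technical heart is the construction, for the log-convex weight $\wass$, of an extremal function $g \in H_\wass^\infty$ given by a power series with nonnegative coefficients and satisfying $g(r) \asymp \wass(r)$ on the positive real axis. For any such $g$ one has $\igl g(r) = \int_0^r g(t)\,dt \asymp \int_0^r \wass(t)\,dt$, so suitable test sequences built from $g$ and concentrated near radii $r_n \to \infty$ convert the compactness hypothesis into the decay condition (i). Producing these extremal functions and verifying their concentration is the delicate point; everything else is a matter of packaging together standard Montel-type arguments with the pointwise estimate above.
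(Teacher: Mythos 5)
Your overall architecture coincides with the paper's: pass from $\we$ to the associated weight $\wass$, use the isometric equality $H_{\wass}^\infty(\Cbb)=H_\we^\infty(\Cbb)$, and apply an elementary characterization valid for weights equivalent to a power series with nonnegative coefficients; your Montel-plus-splitting sketch of (i)$\Rightarrow$(ii) is also fine. However, your justification that $\wass$ has the power-series structure is not correct as stated: you argue that $\wass$ is ``log-convex and hence equivalent to a power series with nonnegative coefficients.'' On $[0,+\infty)$ this implication fails --- Remark~\ref{r_at_plane_bdd} records precisely that equivalence to a log-convex weight does \emph{not} imply equivalence to a power series with positive coefficients on the half-line. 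What actually works is that $(\wass)\widetilde{\ }=\wass$, so $\wass$ is essential, and essential weights on $[0,+\infty)$ are equivalent to power series with positive coefficients by Theorem~2 of \cite{AD17}; that is where \cite{AD17} enters, whereas the identification of the two compactness problems needs no transfer result at all, being the isometric equality of the spaces.

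More seriously, the core of (ii)$\Rightarrow$(i) is named as an ``obstacle'' rather than proved. A single extremal $g$ with $g(r)\asymp\wass(r)$ only yields the boundedness-type inequality $\int_0^r\wass(t)\,dt\le C\wsec(r)$, and your phrase ``test sequences built from $g$ and concentrated near radii $r_n\to\infty$'' does not specify a construction nor explain why compactness upgrades the inequality to the limit in (i) being zero. The mechanism in the paper is to take the tails $g_n(z)=\sum_{k\ge n}a_k z^k$ of the majorizing series: they form a bounded sequence in $H_{\wass}^\infty(\Cbb)$ converging to $0$ uniformly on compacta, so compactness of $\igl$ forces $\|\igl g_n\|_{H_\wsec^\infty}\to 0$; on the other hand, for each fixed $n$ the discarded head $p_n$ is a polynomial, and the standing assumption $\log r=o(\log\we(r))$ makes the weight dominate it eventually, so $g_n(t)\ge\wass(t)$ for all large $t$ and hence
\[
\|\igl g_n\|_{H_\wsec^\infty}\ \ge\ \limsup_{r\to\infty}\frac{1}{\wsec(r)}\int_0^r \wass(t)\,dt ,
\]
which combined with the previous display gives (i). This use of the superpolynomial growth of the weight to show that removing a polynomial head does not destroy the lower bound is exactly the missing ingredient; without it (or an explicit substitute for your ``concentration'' step) the proof is incomplete.
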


Next, for $0<p<\infty$, we introduce a notion of $p$-associated weight $\wasp$.
In fact, using $p$-associated weights, we study the differentiation operator $D$ defined on $\hol(\dom)$
and show that several results from \cite{AT17} are extendable
to $\haw^p(\dom)$ with $1\le p < \infty$ and partially with $0<p<\infty$.
Also, we obtain related results for the integration operator $J$.

\subsection*{Organization of the paper}
In Section~\ref{s_igl}, we study the integration operator on the growth spaces $H_\we^\infty(\dom)$
with emphasis on the case $\dom=\Cbb$;
in particular, we prove Theorem~\ref{t_igl_cmp}.
Section~\ref{s_assoc} is devoted to $p$-associated weights and their basic properties.
General conditions related to the boundedness of the differentiation operator $D: \haw^p(\dom) \to H_\wsec^p(\dom)$
are presented in Section~\ref{s_basic}.
Finally, as in \cite{AT17, HL08}, we consider in Section~\ref{s_special} the following specific situations:
$\we(r) = (1-r)\wsec(r)$ for $0\le r <1$ and $\we(r) = \wsec(r)$ for $0\le r <+\infty$.

\section{Integration operator on $H_\we^\infty(\dom)$}\label{s_igl}
As mentioned in the introduction, the bounded or compact integration operators between
growth spaces $H_\we^\infty(\Cbb)$ and $H_\wsec^\infty(\Cbb)$ are characterized in \cite{AT17CM}
under specific restrictions on $\we$ or for $\we=\wsec$.
So, in the present section, we are primarily interested in the case $\dom=\Cbb$.

\subsection{Bounded operators}

Given $u, v: X \to (0, +\infty)$, we say that $u$ is equivalent to $v$ (in brief, $u\asymp v$)
if there exist constants $C_1, C_2 >0$ such that
\[
C_1 u(x) \le v(x) \le C_2 u(x), \quad x\in X.
\]

\begin{prop}\label{p_igl_bdd}
Let $\we, \wsec$ be weights on $[0, \rmax(\dom))$.
Assume that $\we$ is equivalent on $[0, \rmax(\dom))$ to a power series
with positive coefficients. Then the following properties are equivalent:
\begin{itemize}
  \item [(i)] $\int_0^r \we(t)\, dt \le C \wsec(r)$, $0\le r < \rmax(\dom)$;
  \item [(ii)] The integration operator $\igl: H_\we^\infty(\dom) \to H_\wsec^\infty(\dom)$ is bounded.
\end{itemize}
\end{prop}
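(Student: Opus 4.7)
The plan is to treat the two implications separately; only the direction (ii) $\Rightarrow$ (i) will rely on the power series hypothesis on $\we$.

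For (i) $\Rightarrow$ (ii), I would parametrize the integration path as the radial segment $[0,z]=\{tz:0\le t\le 1\}$, so that $\igl f(z)=z\int_0^1 f(tz)\,dt$. A direct estimate gives
\[
|\igl f(z)|\le |z|\int_0^1 \mm_\infty(f,t|z|)\,dt=\int_0^{|z|}\mm_\infty(f,s)\,ds\le \|f\|_{H_\we^\infty}\int_0^{|z|}\we(s)\,ds.
\]
Taking the maximum over $|z|=r$ and invoking (i) yields $\mm_\infty(\igl f,r)\le C\|f\|_{H_\we^\infty}\wsec(r)$, which is the required boundedness of $\igl$. Notice that this direction makes no use of the power series assumption.

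For (ii) $\Rightarrow$ (i), I would construct a test function from the power series hypothesis. Write $\we(r)\asymp g(r)$ on $[0,\rmax(\dom))$, where $g(r)=\sum_{n\ge 0} a_n r^n$ with $a_n\ge 0$, and set $f(z)=g(z)$. Since the coefficients are nonnegative, $\mm_\infty(f,r)=f(r)=g(r)\asymp \we(r)$, so $f\in H_\we^\infty(\dom)$. Evaluating $\igl f$ on the positive real axis,
\[
\igl f(r)=\sum_{n\ge 0}\frac{a_n}{n+1}r^{n+1}=\int_0^r g(t)\,dt\asymp \int_0^r \we(t)\,dt.
\]
Combining the pointwise lower bound $\mm_\infty(\igl f,r)\ge |\igl f(r)|$ with the assumed boundedness of $\igl$ gives $\int_0^r \we(t)\,dt \le C'\wsec(r)$, with a constant depending on $\|\igl\|$ and on the equivalence constants between $\we$ and $g$.

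The only substantive point, and the sole use of the power series hypothesis, is the construction of this test function. Positivity of the coefficients $a_n$ is essential: it simultaneously forces $\mm_\infty(f,r)\asymp \we(r)$ and identifies $\igl f(r)$ on the positive real axis with a constant multiple of $\int_0^r \we(t)\,dt$, so that the trivial estimate $\mm_\infty(\igl f,r)\ge |\igl f(r)|$ already captures the full integral of $\we$ rather than merely its value at one point. I do not anticipate any deeper obstacle; both implications reduce to elementary estimates once this test function is in place.
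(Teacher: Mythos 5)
Your proposal is correct and follows essentially the same route as the paper: the easy direction is the standard radial integral estimate, and for (ii) $\Rightarrow$ (i) both you and the paper use the power series $g(z)=\sum a_n z^n$ itself as the test function, exploiting the nonnegativity of the coefficients to get $\mm_\infty(g,r)=g(r)\asymp\we(r)$ and $\igl g(r)=\int_0^r g(t)\,dt\ge\int_0^r\we(t)\,dt$. No gaps; the details you supply for the first implication match the cited elementary argument.
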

\begin{proof}
(i)$\Rightarrow$(ii)
This implication is straightforward and it holds for arbitrary weights $\we, \wsec$.
See, for example, \cite{AT17}.

(ii)$\Rightarrow$(i)
By the hypothesis, there exist $a_k\ge 0$, $k=0,1,\dots$, such that
\[
\we(t) \le \sum_{k=0}^\infty a_k t^k \le C \we(t), \quad 0<t<\rmax(\dom).
\]
Put $g(z) = \sum_{k=0}^\infty a_k z^k$, $z\in\dom$. Then $g\in H_\we^\infty(\dom)$ and
\[
\igl g(r) \ge \int_0^r \we(t)\, dt.
\]
Hence, $\int_0^r \we(t)\, dt \le C \|J\| \wsec(r)$, as required.
\end{proof}

\begin{rem}\label{r_at_disk_bdd}
It is know that a weight $\we$ on $[0,1)$ is equivalent to a power series with positive
coefficients if and only if $\we$ is equivalent to a log-convex weight (see \cite[Proposition~1.3]{Dou15}).
Therefore, Proposition~\ref{p_igl_bdd} for $\dom = \Dbb$ is essentially proved in \cite[Proposition~4]{AT17CM}, where the
corresponding compact operators are characterized under assumption that $\we$ is a log-convex weight.
So, below in the present section we assume that $\dom=\Cbb$.
\end{rem}

\begin{rem}\label{r_at_plane_bdd}
The property of being equivalent to a power series with positive coefficients on $[0, +\infty)$
seems to be more sophisticated than its analog on $[0,1)$.
In particular, equivalence of $\we$ to a log-convex weight on $[0, +\infty)$ does not imply
that $\we$ is equivalent to a power series with positive coefficients on $[0, +\infty)$.
Nevertheless, a constructive characterization of the property in question is known (see \cite{AD17}
for further details and other equivalent properties). What is more interesting,
Corollary~\ref{c_igl_plane} below shows that Proposition~\ref{p_igl_bdd} extends to arbitrary weights on $[0, +\infty)$ via the notion of associated weight.
\end{rem}

\begin{df}
  A weight $\we: [0, +\infty) \to (0, +\infty)$ is called \textsl{essential} if $\we\asymp\wass$.
\end{df}

\begin{cory}\label{c_igl_plane}
Let $\we, \wsec$ be weights on $[0, \infty)$.
Then the following properties are equivalent:
\begin{itemize}
  \item [(i)] $\int_0^r \wass(t)\, dt \le C \wsec(r)$, $0\le r < \infty$;
  \item [(ii)] $\igl: H_\we^\infty(\Cbb) \to H_\wsec^\infty(\Cbb)$ is a bounded operator.
\end{itemize}
\end{cory}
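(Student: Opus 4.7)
The plan is to reduce the corollary to Proposition~\ref{p_igl_bdd} by replacing $\we$ with its associated weight $\wass$. The two facts that make this reduction work are, first, the standard identity $\|f\|_{H_\we^\infty(\Cbb)}=\|f\|_{H_{\wass}^\infty(\Cbb)}$ for every $f\in\hol(\Cbb)$ (a direct consequence of the definition of $\wass$, as in \cite{BBT98}); and second, the fact, attributed in the introduction to \cite{AD17}, that for \emph{any} weight $\we$ on $[0,+\infty)$ the associated weight $\wass$ is equivalent to a power series with positive coefficients. The first fact says that the operator $\igl: H_\we^\infty(\Cbb)\to H_\wsec^\infty(\Cbb)$ is the same as $\igl: H_{\wass}^\infty(\Cbb)\to H_\wsec^\infty(\Cbb)$, while the second fact says that $\wass$ satisfies the hypothesis of Proposition~\ref{p_igl_bdd}.

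First I would spell out the isometry $H_\we^\infty(\Cbb)=H_{\wass}^\infty(\Cbb)$: the inclusion $\|f\|_{H_{\wass}^\infty}\le \|f\|_{H_\we^\infty}$ follows because $f/\|f\|_{H_\we^\infty}$ is admissible in the supremum defining $\wass$, and the reverse inclusion follows from $\wass\le \we$, which holds because constant-$1$ weight bounds are preserved when taking the supremum over all admissible $f$. Next I would invoke the result from \cite{AD17} giving $\wass\asymp \sum_{k\ge 0} a_k t^k$ for some $a_k\ge 0$; this is the one nontrivial input from outside.

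With these two facts in hand, applying Proposition~\ref{p_igl_bdd} to the pair $(\wass,\wsec)$ gives that $\igl: H_{\wass}^\infty(\Cbb)\to H_\wsec^\infty(\Cbb)$ is bounded if and only if $\int_0^r \wass(t)\,dt\le C\wsec(r)$ for all $r\ge 0$. Combining this with the isometry identifies the boundedness of $\igl$ on the original space with condition (i), which is exactly the corollary.

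The main obstacle is really the availability of the associated-weight structure theorem from \cite{AD17}; once that is cited, everything else is a formal manipulation and a direct appeal to the already-proved proposition. A minor technical point to double-check is that $\wass$ is itself a weight in the sense of Section~\ref{s_int} (non-decreasing, continuous, unbounded, and satisfying $\log r = o(\log \wass(r))$), so that Proposition~\ref{p_igl_bdd} legitimately applies to $\wass$; these properties follow from $\wass\asymp \sum a_k t^k$ together with $\wass\le \we$.
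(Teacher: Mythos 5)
Your proposal is correct and follows essentially the same route as the paper: use the isometric identity $H_\we^\infty(\Cbb)=H_{\wass}^\infty(\Cbb)$ together with the fact from \cite{AD17} (applied because $(\wass)\widetilde{\ }=\wass$, i.e.\ $\wass$ is essential) that $\wass$ is equivalent to a power series with positive coefficients, and then apply Proposition~\ref{p_igl_bdd} to the pair $(\wass,\wsec)$. The extra details you supply (the proof of the isometry and the check that $\wass$ is a genuine weight) are correct but are exactly the routine points the paper leaves implicit.
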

\begin{proof}
Observe that $(\wass)\widetilde{\ }= \wass$. Hence, the weight $\wass$ is essential.
Therefore, $\wass$ is equivalent to a power series with positive coefficients
by \cite[Theorem~2]{AD17}.
So, by Proposition ~\ref{p_igl_bdd}, the corollary holds with $H_{\wass}^\infty(\Cbb)$ in the place
of $H_\we^\infty(\Cbb)$. It remains to recall that $H_{\wass}^\infty(\Cbb) = H_\we^\infty(\Cbb)$ isometrically.
\end{proof}

\subsection{Compact operators}

In this section, we deduce Theorem~\ref{t_igl_cmp} from the following proposition.

\begin{prop}\label{p_igl_cmp}
Let $\we, \wsec$ be weights on $[0, \infty)$.
Assume that $\we$ is equivalent on $[0, \infty)$ to a power series
with positive coefficients. Then the following properties are equivalent:
\begin{itemize}
  \item [(i)] $$\lim_{r\to\infty} \frac{1}{\wsec(r)}\int_0^r \we(t)\, dt  =0;$$
  \item [(ii)] $\igl: H_\we^\infty(\dom) \to H_\wsec^\infty(\dom)$ is a compact operator.
\end{itemize}
\end{prop}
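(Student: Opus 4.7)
The plan is to treat the two implications separately, using the hypothesized representation $\we(t)\asymp g(t):=\sum_{k=0}^\infty a_k t^k$ with $a_k\ge 0$ as the main technical device.

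For (i)$\Rightarrow$(ii), I would follow the standard Montel argument. Any bounded sequence $\{f_n\}\subset H_\we^\infty(\Cbb)$ is locally uniformly bounded, hence a normal family, and by passing to a subsequence I may assume $f_n\to f$ locally uniformly for some $f\in H_\we^\infty(\Cbb)$. Then $\igl f_n\to \igl f$ locally uniformly and the task is to upgrade this to $H_\wsec^\infty$-norm convergence. Given $\er>0$, I pick $R$ from (i) so that $\int_0^r \we(t)\,dt\le \er\wsec(r)$ for $r\ge R$. For $r\ge R$ the crude bound
\[
\mm_\infty(\igl f_n-\igl f,r)\le \int_0^r \mm_\infty(f_n-f,t)\,dt\le 2\int_0^r \we(t)\,dt
\]
yields $\mm_\infty(\igl f_n-\igl f,r)/\wsec(r)\le 2\er$, while for $r\le R$ the local uniform convergence combined with $\wsec(r)\ge\wsec(0)>0$ provides the required smallness.

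For (ii)$\Rightarrow$(i), I would argue by contradiction. Suppose (i) fails, so there exist $\er>0$ and $r_n\to\infty$ with $\int_0^{r_n}\we(t)\,dt\ge \er\wsec(r_n)$. Consider the tails $f_N(z):=g(z)-\sum_{k=0}^N a_k z^k=\sum_{k>N}a_k z^k$. Their non-negative coefficients give $\mm_\infty(f_N,r)=f_N(r)\le g(r)\le C\we(r)$, so $\{f_N\}$ is bounded in $H_\we^\infty(\Cbb)$, and evidently $f_N\to 0$ locally uniformly. Compactness of $\igl$ together with this local uniform vanishing will force $\|\igl f_N\|_{H_\wsec^\infty}\to 0$. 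To contradict this, for each $N$ I would select $r_{n(N)}$ from the failing sequence so large that the polynomial $\int_0^{r_{n(N)}}\sum_{k\le N}a_k t^k\,dt\le (\er/2)\wsec(r_{n(N)})$; this is possible because the weight assumption $\log r=o(\log\wsec(r))$ forces $\wsec$ to grow faster than every polynomial. Since $\mm_\infty(\igl f_N,r)=\int_0^r f_N(t)\,dt$ by positivity, one obtains
\[
\mm_\infty(\igl f_N,r_{n(N)})\ge \int_0^{r_{n(N)}}\we(t)\,dt-\int_0^{r_{n(N)}}\sum_{k\le N}a_k t^k\,dt\ge (\er/2)\wsec(r_{n(N)}),
\]
so $\|\igl f_N\|_{H_\wsec^\infty}\ge \er/2$ for every $N$, the desired contradiction.

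The main point of care is the step from compactness of $\igl$ plus local uniform vanishing of $f_N$ to the norm decay $\|\igl f_N\|_{H_\wsec^\infty}\to 0$. I would argue that every subsequence of $\{\igl f_N\}$ admits, by compactness, a further subsequence convergent in $H_\wsec^\infty$-norm, and that its limit must coincide with the pointwise (locally uniform) limit of $\igl f_N$, namely $0$; a standard subsequence-of-subsequence argument then delivers norm convergence of the full sequence. Beyond this maneuver, the proof reduces to the elementary inequalities above and the polynomial-versus-super-polynomial growth comparison that absorbs the low-order Taylor terms of $g$ along the sequence $r_n$.
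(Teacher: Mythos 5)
Your proof is correct and takes essentially the same route as the paper: both use the tails of the power series as test functions, the standard ``compactness plus local uniform convergence to $0$ implies norm decay'' argument, and absorption of the low-order Taylor terms via the super-polynomial growth of the weights (the paper absorbs them using the growth of $\we$ and concludes with a direct limsup, you use the growth of $\wsec$ along a bad sequence and argue by contradiction; also, the paper simply cites the known proof of (i)$\Rightarrow$(ii), which is the Montel argument you wrote out). The only cosmetic repair: $\we\asymp g$ gives $g\ge c\,\we$ rather than $g\ge \we$, so rescale $g$ by a positive constant (the paper normalizes to $2\we(t)\le\sum_k a_k t^k\le C\we(t)$) before writing $\int_0^r f_N(t)\,dt\ge\int_0^r \we(t)\,dt-\int_0^r\sum_{k\le N}a_k t^k\,dt$.
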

\begin{proof}
(i)$\Rightarrow$(ii)
The implication follows from the analogous one in Proposition~\ref{p_igl_bdd}; see \cite[Proposition~3]{AT17CM}.

(ii)$\Rightarrow$(i)
By the hypothesis, we are given $a_k\ge 0$, $k=0,1,\dots$, such that
\[
2\we(t) \le \sum_{k=0}^\infty a_k t^k \le C \we(t), \quad 0\le t < \infty.
\]
Put
\[
g_n(z) = \sum_{k=n}^\infty a_k z^k, \quad z\in\Cbb.
\]
Clearly, $g_n$ is an entire function.

Firstly, fix an $R>0$. If $z\in \Cbb$ and $|z|\le r < R$, then
\[
g_n(r) \le \frac{r^n}{R^n} g_0(R) \to 0\quad \textrm{as}\ n\to \infty.
\]
Hence, $|g_n(z)|\to 0$ as $n\to \infty$ uniformly on the disk $\{|z|\le r\}$.
So, $g_n$ is a bounded sequence in $H_\we^\infty(\Cbb)$ and $g_n\to 0$ as $n\to \infty$
uniformly on compact subsets of $\Cbb$.
The operator $J$ is assumed to be compact, hence, a standard argument (see, for example, \cite[Proposition~3.1]{CmC95})
guarantees that
\begin{equation}\label{e_tozero}
\|J g_n\|_{H_{\wsec}^\infty(\Cbb)} \to 0 \ \textrm{as}\ n\to \infty.
 \end{equation}

Secondly, we claim that
\begin{equation}\label{e_gn_w}
\int_0^r g_n(t)\, dt \ge \int_0^r \we(t)\, dt\quad \textrm{for all sufficiently large}\ r>0.
\end{equation}
Indeed, put $p_n(t) = \sum_{k=0}^{n-1} a_k t^k$.
Since $\log r = o(\log\we(r))$ as $r\to \infty$,
there exists $t_0>0$ such that
\[
\we(t) \ge 2 p_n(t)\quad \textrm{and}\quad  g_n(t) = g_0(t) -p_n(t) \ge 3\we(t)/2
\]
for all $t\ge t_0$. Thus, \eqref{e_gn_w} holds.

Now, fix an $n\in\Nbb$. Applying \eqref{e_gn_w}, we obtain
\[
\|J g_n\|_{H_{\wsec}^\infty(\Cbb)} \ge \limsup_{r\to \infty}\frac{1}{\wsec(r)}\int_0^r g_n(t)\, dt
\ge \limsup_{r\to \infty}\frac{1}{\wsec(r)}\int_0^r \we(t)\, dt.
\]
Combining the above estimate and \eqref{e_tozero}, we conclude that (ii) implies (i).
\end{proof}

\begin{proof}[Proof of Theorem~\ref{t_igl_cmp}]
Let $\we$ be an arbitrary weight on $[0, +\infty)$.
As shown in the proof of Corollary~\ref{c_igl_plane}, $\wass$ is equivalent to a power series with positive coefficients. So, applying Proposition~\ref{p_igl_cmp} and the isometric equality $H_{\wass}^\infty(\Cbb) = H_\we^\infty(\Cbb)$, we obtain Theorem~\ref{t_igl_cmp}.
\end{proof}

\section{Associated weights}\label{s_assoc}
Given $0<p<\infty$ and a weight $\we: [0, \rmax(\dom)) \to (0, +\infty)$, put
\[
\wasp(t) = \sup\{\mm_p(f, t): \mm_p(f, r)\le \we(r)\ \textrm{for all}\ 0\le r<\rmax(\dom)\}.
\]
For $p=\infty$, the above definition reduces to that of the associated weight $\wass$;
so, we say that the weight $\wasp$ is $p$-associated to $\we$.
The definition of $\wasp$ guarantees that $H_{\wasp}^p(\dom)=\haw^p(\dom)$ isometrically.

The associated weights have proved to be very useful, since they have certain additional regularity.
For $0<p<\infty$, we have a similar phenomenon.

\begin{df}
A function $v: [0, \rmax(\dom)) \to (0, +\infty)$ is called \textsl{log-convex} if $\log v(r)$
is a convex function of $\log r$, $0<r<\rmax(\dom)$, that is,
$\log u (e^x)$ is convex for $-\infty < x < \log \rmax(\dom)$, where $\log(+\infty) = +\infty$.
\end{df}

Given $f\in\hol(\dom)$ and $0<p< \infty$,
the classical Hardy convexity theorem guarantees that $\mm_p (f, r)$, $0\le r < \rmax(\dom)$, is increasing and
log-convex; see \cite{H14, T50}.
So, for each $p\in (0, \infty)$, the function $\wasp(t)$ is a weight; moreover,
$\wasp(t)$ is log-convex as the pointwise supremum of log-convex functions.
In particular, $\wasp^\prime(t)$, the right derivative of $\wasp$, exists for all $0\le t < \rmax(\dom)$.

We will need two results related to $p$-associated weights on $[0,1)$ and $[0,+\infty)$, respectively.

\begin{prop}\label{p_logconv_ass}
  Let $0<p<\infty$ and let $\we$ be a log-convex weight on $[0,1)$.
  Then there exists a constant $C>0$ such that
  \[
  \wasp(r) \le \we(r) \le C \wasp(r), \quad 0\le r< 1.
  \]
\end{prop}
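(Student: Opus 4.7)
The upper bound $\wasp(r) \le \we(r)$ is immediate from the definition: any $f$ admissible in the supremum satisfies $\mm_p(f,s) \le \we(s)$ for all $s$, and in particular $\mm_p(f,r) \le \we(r)$, so the supremum cannot exceed $\we(r)$.

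For the nontrivial direction $\we(r) \le C \wasp(r)$, my plan is to build a monomial test function for each $r$, exploiting the fact that $\mm_p(c z^n, s) = cs^n$ is independent of $p$. For each $n \in \zz$ set $c_n := \inf_{0 \le s < 1} \we(s)/s^n$, so that $f_n(z) = c_n z^n$ is automatically admissible. The task is to choose $n = n(r)$ for which $c_n r^n \gtrsim \we(r)$. Writing $\phi(x) = \log \we(e^x)$ on $(-\infty, 0)$, log-convexity says $\phi$ is convex and non-decreasing, and unboundedness of $\we$ forces $\phi'_+(x) \to +\infty$ as $x \to 0^-$. I would set $\alpha := \phi'_+(\log r)$ and $n := \lceil \alpha \rceil$; then the convex function $\psi(x) := \phi(x) - nx$ attains its minimum at a point $x^*$ lying in $[\log r, 0)$, and unwinding the definitions gives
\[
\log \bigl( c_n r^n / \we(r) \bigr) = \psi(x^*) - \psi(\log r).
\]

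The key estimate is that this quantity is bounded below. On the interval $[\log r, x^*]$ the slope $|\psi'(y)| = n - \phi'_+(y)$ lies in $[0, n - \alpha] \subseteq [0, 1]$, so integration gives $\psi(\log r) - \psi(x^*) \le x^* - \log r < |\log r|$. For $r \in [1/2, 1)$ this is bounded by $\log 2$, yielding $c_n r^n \ge \we(r)/2$ and hence $\wasp(r) \ge \we(r)/2$. On the complementary range $r \in [0, 1/2]$ the constant function $f \equiv \we(0)$ is admissible by monotonicity of $\we$ and gives $\wasp(r) \ge \we(0)$, which is comparable to $\we(r) \le \we(1/2)$ on that range. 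Combining the two yields $\we(r) \le C \wasp(r)$ with $C$ depending only on $\we(1/2)/\we(0)$ and the factor $2$.

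The main obstacle is the integrality constraint: the optimal real exponent $\alpha$ is generically not an integer, and using $n = \lceil \alpha \rceil$ shifts the minimizer $x^*$ of $\psi$ away from $\log r$. The observation that keeps this drift controlled is the trivial inequality $x^* < 0$, so that $x^* - \log r < |\log r|$ remains bounded on $[r_0, 1)$ for any fixed $r_0 > 0$; the small-$r$ range is then absorbed by the separate constant-function argument above.
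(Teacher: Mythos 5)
Your proof is correct, but it follows a genuinely different route from the paper. The paper's argument for the nontrivial inequality $\we\le C\wasp$ is a one-liner resting on an external result (Theorem~1.1 of the reference \cite{D17bams}): for every log-convex weight $\we$ on $[0,1)$ there is a \emph{single} holomorphic function $f$ with $\we(r)/C\le \mm_p(f,r)\le\we(r)$, which immediately gives the claim. You instead construct, for each radius $r$, an admissible monomial $c_nz^n$ with $c_n=\inf_s \we(s)/s^n$ and $n=\lceil \ph'_+(\log r)\rceil$ for the logarithmic transform, and you control the loss caused by rounding the exponent to an integer via the Legendre-type identity $\log(c_nr^n/\we(r))=\psi(x^*)-\psi(\log r)$ together with the slope bound $|\psi'|\le 1$ on $[\log r,x^*]$; this gives $c_nr^n\ge r\we(r)$, which suffices on $[1/2,1)$, while the constant function $\we(0)$ handles $[0,1/2]$. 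This is exactly the mechanism of the paper's own Proposition~\ref{p_logconv_rw} for the plane (where the analogous loss factor $r+1$ is unbounded and must be kept), transplanted to the disk where the factor $1/r$ is harmless away from the origin. What each approach buys: the paper's proof is shorter and leans on a much stronger fact (a single test function realizing the weight, which is useful elsewhere), whereas yours is elementary and self-contained, needs a different monomial for each $r$ (which is all the definition of $\wasp$ requires), works uniformly for all $0<p<\infty$ since monomial means are $p$-independent, and yields an explicit constant $C=\max\{2,\we(1/2)/\we(0)\}$. The small points you leave implicit—finiteness of $\al=\ph'_+(\log r)$, that $\al\ge 0$ so $n\in\zz$, and that the minimizer $x^*$ of $\psi$ can be taken in $[\log r,0)$ because $\psi'_+(\log r)=\al-n\le0$ while $\psi'_+>0$ near $0$ (using unboundedness of $\we$)—are all easily verified, so the argument stands.
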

\begin{proof}
Clearly, $\wasp \le \we$. Next, by Theorem~1.1 from \cite{D17bams}, there exists a function $f\in\hol(\Dbb)$
and a constant $C>0$ such that $\we(r)/C \le \mm_p(f,r) \le \we(r)$, $0\le r< 1$; hence, $\we\le C \wasp$.
\end{proof}

\begin{prop}\label{p_logconv_rw}
Let $0<p<\infty$ and let $\we$ be a log-convex weight on $[0,+\infty)$.
  Then there exists a constant $C>0$ such that
  \[
  \we(r) \le C(r+1) \wasp(r), \quad 0\le r< +\infty.
\]
\end{prop}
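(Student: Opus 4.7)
The plan is to estimate $\wasp(r)$ from below by the monomials $c_n z^n$ admissible in the definition of the $p$-associated weight, and to use log-convexity to show that the resulting supremum captures $\we(r)$ up to the allowed factor $r+1$. To set this up, I would let $c_n = \inf_{s>0}\we(s)/s^n$ for each $n\in\zz$; since $\we(0)>0$ and $\log s = o(\log\we(s))$, every $c_n$ is positive, and the monomial $c_n z^n$ satisfies $\mm_p(c_n z^n,s) = c_n s^n \le \we(s)$ for all $s$, so by the very definition of $\wasp$ one has $\wasp(r)\ge c_n r^n$ for every $n$. The constant function $\we(0)$ supplies the supplementary lower bound $\wasp(r)\ge\we(0)$.

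The core of the argument is log-convex bookkeeping. Set $\phi(x)=\log\we(e^x)$: the hypothesis makes $\phi$ convex and non-decreasing on $\Rbb$ with $\phi(x)/x\to\infty$, and I denote by $k(x)=\phi'_+(x)$ its right derivative, which is non-decreasing and tends to $+\infty$. Writing $x_0=\log r$ and letting $\xi_n$ be a minimizer of $\phi(x)-nx$ (so that $\log c_n = \phi(\xi_n)-n\xi_n$), one has
\[
\phi(x_0)-\log(c_n r^n) = \int_{\xi_n}^{x_0}\bigl(\phi'(u)-n\bigr)\,du
\]
whenever $\xi_n\le x_0$. Taking $n=\lfloor k(x_0)\rfloor$ makes $\xi_n\le x_0$ and forces $\phi'(u)-n\in[0,k(x_0)-n)\subset[0,1)$ on the interval of integration, which yields the key one-step estimate
\[
\phi(x_0)-\log(c_n r^n)\le x_0-\xi_n.
\]

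To finish, I would fix once and for all a threshold $x_*>0$ with $k(x_*)>k(0)+1$. For $x_0\ge x_*$ one then has $n>k(0)$, hence $\xi_n>0$, and therefore $x_0-\xi_n\le x_0\le\log(r+1)$; exponentiating gives $\we(r)\le(r+1)c_n r^n\le(r+1)\wasp(r)$. For $0\le r<e^{x_*}$ the monomial argument is replaced by the elementary observation that $\we(r)\le\we(e^{x_*})$ is bounded from above while $\wasp(r)\ge\we(0)>0$ is bounded below, so a finite constant $C_1$ satisfies $\we(r)\le C_1\wasp(r)\le C_1(r+1)\wasp(r)$ on this range; combining the two cases yields the proposition. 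I expect the main obstacle to be the rounding of the optimal real slope $k(x_0)$ to the integer $\lfloor k(x_0)\rfloor$: the resulting slack $x_0-\xi_n$ is not bounded in general, and it is precisely this deficit that forces the factor $r+1$ (rather than a constant) into the statement. The reason the deficit can be absorbed is that $k(0)$ is a fixed finite number, and the separate treatment of small $r$ is necessary exactly because this control requires $n$ to exceed $k(0)$.
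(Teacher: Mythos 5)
Your proof is correct and follows essentially the same route as the paper: your coefficients $c_n=\inf_{s>0}\we(s)/s^n$ are exactly the paper's supporting-line constants $e^{a_n}$ for the convex function $\log\we(e^t)$, and your key estimate with $n=\lfloor\phi'_+(\log r)\rfloor$ rests on the same observation that consecutive integer slopes differ by $1$, so the deficit between $\log\we$ and the best integer-slope minorant is at most $\log r$, which is what produces the factor $r+1$. The remaining differences are only bookkeeping: you choose the slope pointwise and dispose of small $r$ by a trivial bound, whereas the paper works with the envelope $G(t)=\max_{n\ge 0}(a_n+nt)$ after the normalization $\we(0)=\we(1)=1$.
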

\begin{proof}
First, assume that $\we(0) = \we(1) =1$.
By assumption, the logarithmic transform $\Phi_\we(t) =\log\we(e^t)$, $-\infty < t <+\infty$,
is a convex function. Put $a_0 = \log\we(0) =0$ and $t_0=0$.
For $n=1,2,\dots$, select $a_n\in\Rbb$ such that $a_n + nt \le \Phi_\we(t)$
and there exists a point $t_n$ such that $a_n + nt_n = \Phi_\we(t_n)$;
we additionally assume that $a_n + nt < \Phi_\we(t)$ for $t>t_n$.

Put
\[
G(t) =\sup_{n\ge 0} (a_n + nt) = \max_{n\ge 0} (a_n + nt), \quad -\infty < t < +\infty.
\]
For $n=0,1,\dots$, we estimate the difference $\Phi_\we(t) - G(t)$ on the interval $[t_n, t_{n+1}]$.
Clearly, we may assume that $t_{n+1}> t_n$.
Let $\Phi_\we^\prime(t)$ denote the right derivative of the convex function $\Phi_\we(t)$.
We have $G^\prime(t) =n$ and $\Phi_\we^\prime(t) \le n+1$ for $t_n \le t < t_{n+1}$; hence,
\[
\Phi_\we(t) - G(t) \le 1\cdot (t-t_n) \le t.
\]
Taking the exponentials, we obtain
\begin{equation}\label{e_we_trop}
\we(e^t) \le e^t \exp(G(t)), \quad 0\le t < \infty.
\end{equation}

Now, let $0<p<\infty$. Fix an $r\ge 1$ and put $t=\log r$. By the definition of $G$,
we have $G(t) = a_n + nt$ for a positive integer $n$.
Then $f_n(z) = e^{a_n} z^n$, $z\in\Cbb$, is a holomorphic monomial such that $\mm_p(f_n, e^t) =\exp(G(t))$.
Thus, by \eqref{e_we_trop},
\[
r\mm_p(f_n, r) \ge \we(r) \ge \mm_p(f_n, r).
\]
In other words, if $\we(0) =\we(1) =1$, then $\we(r) \le r \wasp(r)$, $r\ge 1$.
Therefore, we have the required estimate $\we(r) \le C(r+1) \wasp(r)$, $0\le r< +\infty$,
for an arbitrary log-convex $\we$.

\end{proof}

\section{Differentiation: basic results}\label{s_basic}

\subsection{A necessary condition}
For $p=\infty$, the following result was obtained in \cite[Corollary~2.2]{AT17}.

\begin{prop}\label{p_intoder}
Let $\we$ and $\wsec$ be weights on $[0, \rmax(\dom))$ and let $1\le p<\infty$.
Assume that the differentiation operator maps $\haw^p(\dom)$ into $H_\wsec^p(\dom)$. Then
\[
\wasp^\prime(r) \le C \wsecasp(r), \quad 0\le r < \rmax(\dom).
\]
\end{prop}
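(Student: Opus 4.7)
The plan is to exploit the isometric equalities $\haw^p(\dom) = H_{\wasp}^p(\dom)$ and $H_\wsec^p(\dom) = H_{\wsecasp}^p(\dom)$ to rephrase the hypothesis in terms of the associated weights, and then to control the growth of $M_p(f,\cdot)$ by $M_p(f',\cdot)$ via a one-dimensional integral inequality. Let $C$ denote the norm of $D\colon \haw^p(\dom) \to H_\wsec^p(\dom)$. The first reduction is: whenever $f\in\hol(\dom)$ satisfies $M_p(f,s) \le \wasp(s)$ for every $0 \le s < \rmax(\dom)$, one has $\|f\|_{\haw^p}\le 1$, hence $\|Df\|_{H_\wsec^p}\le C$, and the isometric identity for $\wsec$ then gives $M_p(f',s) \le C\wsecasp(s)$ for all such $s$.

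The core technical step is the inequality
\[
M_p(f, r+h) - M_p(f, r) \le \int_r^{r+h} M_p(f', s)\, ds
\]
for $f\in\hol(\dom)$, $0\le r < r+h < \rmax(\dom)$ and $1 \le p < \infty$. I would begin from the radial fundamental theorem of calculus $f(\rho e^{i\theta}) - f(r e^{i\theta}) = \int_r^\rho e^{i\theta} f'(s e^{i\theta})\,ds$, apply the reverse triangle inequality for the $L^p(d\theta/(2\pi))$ norm to obtain $M_p(f, r+h) - M_p(f, r) \le \|f((r+h)e^{i\cdot}) - f(re^{i\cdot})\|_{L^p}$, and then use Minkowski's integral inequality in $s$ to pull the integration outside the norm.

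Combining the two steps, for any admissible $f$ as in the first reduction,
\[
M_p(f, r+h) \le \wasp(r) + C \int_r^{r+h} \wsecasp(s)\, ds.
\]
Taking the supremum over all such $f$ and recalling that $\wasp(r+h)$ is precisely this supremum, we get $\wasp(r+h) - \wasp(r) \le C \int_r^{r+h} \wsecasp(s)\, ds$. Dividing by $h>0$ and letting $h \to 0^+$, the continuity of the log-convex weight $\wsecasp$ (Section~\ref{s_assoc}) and the existence of the right derivative $\wasp^\prime(r)$ together deliver $\wasp^\prime(r) \le C\wsecasp(r)$.

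The main obstacle is the integral inequality in the second paragraph, and this is also where the hypothesis $p\ge 1$ enters essentially, via the reverse triangle inequality in $L^p$ (which fails for $p<1$); this is consistent with the proposition being stated only for $1\le p<\infty$.
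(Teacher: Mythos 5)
Your proof is correct, and it follows the same overall strategy as the paper: pass to the associated weights via the isometric identities, use the hypothesis to bound $\mm_p(f',s)$ by $C\wsecasp(s)$ uniformly on the unit ball, exploit the reverse triangle inequality in $L^p$ (the only place $p\ge 1$ is needed, exactly as in the paper), and finish by taking the right derivative of the log-convex weight $\wasp$. The difference is in how the limit is organized. The paper fixes $r$ and $t$, picks a near-extremal function $f=f(r,t)$ with $\mm_p(f,r+t)\ge\wasp(r+t)-t^2$, bounds the difference quotient of $\wasp$ by the $L^p$ norm of the difference quotient of $f$, and then lets $t\to 0^+$; since the extremal function there varies with $t$, that last limit needs an extra word of justification (e.g.\ writing the difference quotient as an average of $f'$ along the radius, as you do). You instead prove the integrated inequality $\mm_p(f,r+h)-\mm_p(f,r)\le\int_r^{r+h}\mm_p(f',s)\,ds$ (reverse triangle inequality plus Minkowski's integral inequality), take the supremum over the whole ball — using, in effect, that $(\wasp)^\sim_p=\wasp$, so the supremum of $\mm_p(f,r+h)$ over your admissible class is exactly $\wasp(r+h)$ — and only then divide by $h$ and let $h\to 0^+$, using monotonicity and continuity of $\wsecasp$. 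This removes any dependence of the test function on the increment and so is slightly more robust than the paper's argument, at the cost of the extra (standard) appeal to Minkowski's integral inequality; both arguments yield the same constant, namely the norm of $\diff$.
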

\begin{proof}
Let $B(\haw^p)$ denote the unit ball of $\haw^p(\dom)$.
Under assumptions of the proposition, there exists a constant $C>0$
such that
\[
\|f^\prime\|_{H_{\wsecasp}^p}= \|f^\prime\|_{H_\wsec^p} \le C \quad\textrm{for all}\
f\in B(\haw^p).
\]

Let $t>0$ and $r, r+t\in [0, \rmax(\dom))$. Clearly, we have $\mm_p(f,r) \le \wasp(r)$.
Also, by the definition of $\wasp(r+t)$, there exists $f=f(r,t)\in B(\haw^p)$ such that
\[
\mm_p(f, r+t) \ge \wasp(r+t)-t^2.
\]
Since $p\ge 1$, we have
\begin{align*}
  \frac{\wasp(r+t)- \wasp(r) -t^2}{t} & \le \frac{\mm_p(f, r+t) - \mm_p(f,r)}{t} \\
  &\le\left(\frac{1}{2\pi}\int_0^{2\pi} \left| \frac{f((r+t)e^{i\theta})- f(re^{i\theta})}{t}  \right|^p \,d\theta \right)^{\frac{1}{p}}.
\end{align*}
Taking the limit as $t\to 0+$,
we obtain
\[
\wasp^\prime(r) \le \mm_p(f^\prime,r) \le C \wsecasp(r),
\]
since $f\in B(\haw^p)$.
\end{proof}

\subsection{Sufficient conditions}

\begin{lem}[{\cite[Lemma~2.2]{P99}}]\label{l_imeans}
Let $f\in\hol(\dom)$ and $0<p<\infty$. Then there exists a constant $C_p>0$ such that
\[
\mm_p(f^\prime, r) \le  \frac{C_p}{R-r} \max_{0<t<R}\mm_p(f, t)
\]
for all $0<r<R<\rmax(\dom)$.
\end{lem}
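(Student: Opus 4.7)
The plan is to prove this Hardy--Littlewood type estimate by combining Cauchy's integral formula for the derivative with the subharmonicity of $|f|^p$. Fix $0<r<R<\rmax(\dom)$ and set $s=(R-r)/2$. For $|z|=r$ the disk $\{|\zeta-z|\le s\}$ is contained in $\{|\zeta|\le (R+r)/2\}\subset\dom$, so Cauchy's formula on this circle gives
\[
|f'(z)| \;\le\; \frac{1}{2\pi s}\int_0^{2\pi}|f(z+se^{i\varphi})|\,d\varphi.
\]

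For $p\ge 1$, I would raise this to the $p$-th power (using Jensen), integrate in $\theta$ with $z=re^{i\theta}$, and then apply Minkowski's integral inequality to swap the $L^p$-mean with the $\varphi$-integral, thereby bounding $\mm_p(f',r)$ by the $\varphi$-average of off-center $L^p$-means
\[
\Big(\frac{1}{2\pi}\int_0^{2\pi}|f(re^{i\theta}+se^{i\varphi})|^p\,d\theta\Big)^{1/p}.
\]
The key step is then to control each such off-center average by a constant multiple of $\mm_p^p(f,\rho)$ for $\rho$ slightly less than $R$. For this I would exploit that $|f|^p$ is subharmonic (for every $p>0$) and is hence majorized by its Poisson integral on $\{|w|<\rho\}$; averaging in $\theta$ and invoking the mean-value property of the harmonic Poisson kernel reduces the circle average to the Poisson kernel evaluated at the center $se^{i\varphi}$, whose elementary bound is $(\rho+s)/(\rho-s)=(3R-r)/(R+r)\le 3$ as $\rho\to R^-$. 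This closes the case $1\le p<\infty$ with $C_p=2\cdot 3^{1/p}$.

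The main obstacle will be the case $0<p<1$, where Minkowski's inequality is unavailable. I would handle this by working instead with the subharmonicity of $|f'|^p$ itself (valid for all $p>0$): the sub-mean-value inequality on the disk $D(re^{i\theta},s)$, integrated in $\theta$ and with the order of integration exchanged, bounds $\mm_p^p(f',r)$ by a weighted area integral of $|f'|^p$ over the annulus $\{r-s<|w|<r+s\}$ of width $\sim(R-r)$. A pointwise Cauchy estimate for $|f'(w)|^p$ on this annulus, combined with the same Poisson majorization for $|f|^p$ applied on slightly smaller scales, should then recover the sharp $(R-r)^{-1}$ dependence; this corresponds to Pavlovi\'c's original argument.
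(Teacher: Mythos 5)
The paper gives no proof of this lemma at all (it is quoted from Pavlovi\'c \cite[Lemma~2.2]{P99}), so you are supplying an argument from scratch. Your treatment of the range $1\le p<\infty$ is correct: Cauchy's formula, Minkowski's integral inequality, and the observation that averaging the Poisson kernel $P_\rho(\,\cdot\,,\rho e^{it})$ over the circle $\{re^{i\theta}+se^{i\varphi}:\ 0\le\theta<2\pi\}$ (a circle of radius $r$ centred at $se^{i\varphi}$, contained in $\{|w|<\rho\}$ since $r+s<\rho<R$) returns $P_\rho(se^{i\varphi},\rho e^{it})\le(\rho+s)/(\rho-s)\le 3$ is a clean way to control the off-centre means, and the constant $2\cdot 3^{1/p}$ checks out.

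The case $0<p<1$, however, has a genuine gap. After the sub-mean-value step for $|f'|^p$ and the exchange of integrals (with the angular estimate $|\{\theta:|re^{i\theta}-w|<s\}|\lesssim s/r$ for $s\lesssim r$) you have $M_p^p(f',r)\lesssim (sr)^{-1}\int_{\mathrm{ann}}|f'|^p\,dA$, where the annulus has area $\sim rs$. You then propose to bound $|f'(w)|^p$ on the annulus pointwise, via a Cauchy estimate together with the Poisson majorization of $|f|^p$. But any pointwise bound of $\sup_{D(w,s')}|f|^p$ by $M_p^p(f,\rho)$ necessarily costs a factor of order $R/(R-r)$ (the Poisson kernel bound $(\rho+|\zeta|)/(\rho-|\zeta|)$ is sharp here), and once $|f'|^p$ has been replaced by a constant on the annulus, integrating over it merely cancels the prefactor $(sr)^{-1}$. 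The chain therefore yields $M_p(f',r)\lesssim (R-r)^{-1}\bigl(R/(R-r)\bigr)^{1/p}\max_{t<R}M_p(f,t)$, which misses the lemma by the unbounded factor $\bigl(R/(R-r)\bigr)^{1/p}$; this loss is intrinsic to inserting a sup-type bound before integrating. The standard repair keeps the averaging on $|f|^p$ itself: first prove the pointwise inequality $|f'(z)|^p\le C_p\,s^{-p-2}\int_{D(z,s)}|f|^p\,dA$ (Cauchy gives $|f'(z)|\le Cs^{-1}\sup_{D(z,s/2)}|f|$, and the sub-mean-value property of $|f|^p$ on disks of radius $s/2$ about points of $D(z,s/2)$ converts that sup into the area integral), and only then integrate in $\theta$ and exchange; the resulting annulus integral is of $|f|^p$, hence bounded by $C\,rs\,\max_{t<R}M_p^p(f,t)$, and the sharp $(R-r)^{-1}$ emerges (treat $r\le s$ separately, replacing the angular estimate by the trivial bound $2\pi$). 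Organized this way the argument covers all $0<p<\infty$ at once, so your separate Minkowski case, though correct, becomes redundant.
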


\begin{cory}\label{c_suff_disk}
  Let $\we$ and $\wsec$ be weights on $[0,1)$ such that
  \[
  \we\left(\frac{1+r}{2}\right) \le C (1-r) \wsec(r), \quad 0\le r <1.
  \]
Then the differentiation operator $\diff: \haw^p(\Dbb)\to H_\wsec^p(\Dbb)$ is bounded for all $0<p<\infty$.
\end{cory}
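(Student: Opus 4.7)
The plan is to apply Lemma~\ref{l_imeans} with the natural choice $R=(1+r)/2$, which gives $R-r=(1-r)/2$. For an $f\in\haw^p(\Dbb)$ and $0<r<1$, this yields
\[
\mm_p(f^\prime,r)\le \frac{2C_p}{1-r}\,\max_{0<t<(1+r)/2}\mm_p(f,t).
\]

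Next I would bound the maximum on the right-hand side by using the definition of the $\haw^p$-norm together with the fact that the weight $\we$ is non-decreasing: since $\mm_p(f,t)\le\we(t)\|f\|_{\haw^p}\le \we((1+r)/2)\|f\|_{\haw^p}$ for every $t<(1+r)/2$, we obtain
\[
\mm_p(f^\prime,r)\le \frac{2C_p}{1-r}\,\we\!\left(\frac{1+r}{2}\right)\|f\|_{\haw^p}.
\]

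At this point the hypothesis $\we((1+r)/2)\le C(1-r)\wsec(r)$ kills the factor $1/(1-r)$ and leaves the clean bound $\mm_p(f^\prime,r)\le 2CC_p\,\wsec(r)\|f\|_{\haw^p}$ for all $0\le r<1$. Taking the supremum over $r$ then gives $\|\diff f\|_{H_\wsec^p}\le 2CC_p\|f\|_{\haw^p}$, which is the boundedness asserted.

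This argument is essentially routine once Lemma~\ref{l_imeans} is available; the only point that requires any care is the elementary observation that $\mm_p(f,\cdot)$ being non-decreasing (a consequence of Hardy's convexity) lets us replace the maximum of $\mm_p(f,t)$ over $t\in(0,(1+r)/2)$ by a single weight value, so the hypothesis on $\we$ and $\wsec$ can be applied cleanly. No deeper obstacle is expected.
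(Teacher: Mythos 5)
Your proof is correct and follows exactly the paper's argument: the paper's proof of this corollary is precisely to set $R=\frac{1+r}{2}$ in Lemma~\ref{l_imeans} for $\dom=\Dbb$, and your write-up simply fills in the routine estimates (monotonicity of $\we$ and the hypothesis absorbing the factor $1/(1-r)$).
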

\begin{proof}
  Put $R=\frac{1+r}{2}$ in Lemma~\ref{l_imeans} for $\dom=\Dbb$.
\end{proof}

\begin{cory}\label{c_suff_plane}
  Let $\we$ and $\wsec$ be weights on $[0,+\infty)$ such that
  \[
  \we(1+r) \le C \wsec(r), \quad 0\le r <+\infty.
  \]
Then the differentiation operator $\diff: \haw^p(\Cbb)\to H_\wsec^p(\Cbb)$ is bounded for all $0<p<\infty$.
\end{cory}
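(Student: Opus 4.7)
The plan is to mimic the argument indicated for Corollary~\ref{c_suff_disk}, but with a different choice of $R$ adapted to the plane. Concretely, for a fixed $r\ge 0$ I would apply Lemma~\ref{l_imeans} with the choice $R = r+1$, so that $R-r = 1$ and the prefactor $C_p/(R-r)$ becomes the harmless constant $C_p$.

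Next, for any $f\in \haw^p(\Cbb)$ with $\|f\|_{\haw^p}\le 1$, the definition of the norm gives $\mm_p(f,t)\le \we(t)$ for all $0\le t<\infty$, and the monotonicity of the weight $\we$ yields
\[
\max_{0<t<r+1}\mm_p(f,t) \le \max_{0<t<r+1}\we(t) = \we(r+1).
\]
Combining these steps with the hypothesis $\we(1+r)\le C\wsec(r)$ produces
\[
\mm_p(f^\prime, r) \le C_p \we(r+1) \le C_p C\, \wsec(r),
\]
for every $0\le r<\infty$. Taking the supremum in $r$ gives $\|f^\prime\|_{H_\wsec^p}\le C_pC$, and hence $\diff$ is bounded from $\haw^p(\Cbb)$ to $H_\wsec^p(\Cbb)$ with norm at most $C_p C$.

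There is no real obstacle: the only conceptual point is selecting $R=r+1$ (the natural planar analog of $R=(1+r)/2$ in the disk), which makes the denominator in Lemma~\ref{l_imeans} a constant and lets the growth hypothesis $\we(1+r)\le C\wsec(r)$ directly absorb the bound on $\mm_p(f,t)$ for $t<r+1$. Monotonicity of $\we$ is what allows the maximum over $t\in(0,r+1)$ to be controlled by $\we(r+1)$ in the first place, and no further regularity of the weights is needed.
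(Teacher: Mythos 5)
Your proposal is correct and is exactly the paper's argument: the paper's proof consists of the single instruction ``Put $R=1+r$ in Lemma~\ref{l_imeans} for $\dom=\Cbb$,'' and you have simply spelled out the routine details (monotonicity of $\we$ to bound the maximum, then the hypothesis $\we(1+r)\le C\wsec(r)$). No gaps.
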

\begin{proof}
  Put $R=1+r$ in Lemma~\ref{l_imeans} for $\dom=\Cbb$.
\end{proof}

\section{Differentiation: specific target spaces}\label{s_special}

\subsection{Differentiation from $\haw^p(\Dbb)$ into $H_\wsec^p(\Dbb)$ for $\wsec(r)= \we(r)/(1-r)$}

\begin{df}
A weight $\we: [0,1)\to (0,+\infty)$ is called doubling if the exists a constant $d>0$
such that
\[
\sup_{n\in\zz} \frac{\we(1-2^{-n-1})}{\we(1-2^{-n})} \le d <\infty.
\]
We say that $d$ is a doubling constant for $\we$.
\end{df}

\begin{prop}\label{p_suff_disk}
  Let $\we$ be a doubling weight on $[0,1)$. Put
  \[
  \wsec(r) = \frac{\we(r)}{1-r}, \quad 0\le r <1.
  \]
Then the differentiation operator $\diff: \haw^p(\Dbb)\to H_\wsec^p(\Dbb)$ is bounded for all $0<p<\infty$.
\end{prop}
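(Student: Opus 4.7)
The plan is to reduce Proposition~\ref{p_suff_disk} directly to Corollary~\ref{c_suff_disk}. With the specific choice $v(r) = w(r)/(1-r)$, the sufficient condition $w((1+r)/2) \le C(1-r)\, v(r)$ that appears there collapses to the purely geometric inequality
\[
w\!\left(\frac{1+r}{2}\right) \le C\, w(r), \qquad 0\le r < 1.
\]
So the task is simply to show that the dyadic doubling hypothesis in the definition propagates to doubling along the arithmetic midpoints $r \mapsto (1+r)/2$.

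The elementary observation driving this is that $(1+r)/2 = 1 - (1-r)/2$ shifts the distance to the boundary by exactly a factor of $2$, i.e.\ by one dyadic scale. Concretely, for any $r \in [0,1)$ I would choose the unique integer $n \in \zz$ with $1 - 2^{-n} \le r < 1 - 2^{-n-1}$, which forces
\[
\frac{1+r}{2} \le 1 - 2^{-n-2}.
\]
Then monotonicity of $w$ followed by two applications of the doubling bound yields
\[
w\!\left(\frac{1+r}{2}\right) \le w(1-2^{-n-2}) \le d^{2}\, w(1-2^{-n}) \le d^{2}\, w(r),
\]
with $d$ the doubling constant of $w$. This is precisely the hypothesis of Corollary~\ref{c_suff_disk} with $C = d^{2}$, so the corollary immediately supplies boundedness of $\diff : \haw^p(\Dbb) \to H_\wsec^p(\Dbb)$ for every $0 < p < \infty$.

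There is essentially no obstacle: all the analytic content is already hidden in Lemma~\ref{l_imeans} and Corollary~\ref{c_suff_disk}, and the remaining work is only the dyadic-to-continuous bookkeeping above, with the minor care of handling $r$ close to $0$ (where one simply takes $n=0$, using that $w$ is bounded on compact subsets of $[0,1)$).
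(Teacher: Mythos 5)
Your proposal is correct and follows essentially the same route as the paper: you select the same dyadic index $n$ with $1-2^{-n}\le r<1-2^{-n-1}$, apply the doubling bound twice to get $\we\left(\frac{1+r}{2}\right)\le d^{2}\we(r)=d^{2}(1-r)\wsec(r)$, and then invoke Corollary~\ref{c_suff_disk}. The only difference is cosmetic (your chain of inequalities runs in the opposite direction, and the remark about small $r$ is unnecessary since the dyadic intervals already cover all of $[0,1)$).
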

\begin{proof}
Let $d$ be a doubling constant for $\we$. Select $n\in\zz$ such that $2^{-n} \ge 1-r > 2^{-n-1}$.
Then
\[
\we(r) \ge \we (1-2^{-n}) \ge d^{-2} \we(1-2^{-n-2}) \ge d^{-2} \we\left(\frac{1+r}{2} \right).
\]
In other words,
\[
\we\left(\frac{1+r}{2}\right) \le d^2 \we(r) = d^2 (1-r) \wsec(r), \quad 0\le r <1,
\]
hence, Corollary~\ref{c_suff_disk} applies.
\end{proof}

\begin{prop}\label{p_ness_disk}
  Let $1\le p<\infty$ and let $\we$ be a log-convex weight on $[0,1)$. Assume that the differentiation operator
  $\diff: \haw^p(\Dbb)\to H_\wsec^p(\Dbb)$ is bounded, where $\we(r) = (1-r)\wsec(r)$, $0\le r <1$.
  Then $\we$ is a doubling weight.
\end{prop}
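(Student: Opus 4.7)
The plan is to combine the two previously established results, Proposition~\ref{p_intoder} (relating $\wasp'$ to $\wsecasp$) and Proposition~\ref{p_logconv_ass} (giving $\wasp \asymp \we$ when $\we$ is log-convex), to derive a differential inequality for $\wasp$ on $[0,1)$, and then integrate it to obtain doubling.

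First I would record the two immediate consequences of the hypotheses. Log-convexity of $\we$ together with Proposition~\ref{p_logconv_ass} yields $\wasp(r) \asymp \we(r)$ on $[0,1)$. Also $\wsecasp \le \wsec$ holds by definition of the $p$-associated weight, and by hypothesis $\wsec(r) = \we(r)/(1-r)$. Plugging these into Proposition~\ref{p_intoder} gives
\[
\wasp'(r) \le C \wsecasp(r) \le \frac{C \we(r)}{1-r} \le \frac{C' \wasp(r)}{1-r}, \quad 0 \le r < 1,
\]
where $\wasp'$ denotes the right derivative (which exists everywhere because $\wasp$ is log-convex).

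Next I would convert this pointwise estimate into an integrated one. Since $\wasp$ is log-convex, the function $x \mapsto \log \wasp(e^x)$ is convex on $(-\infty, 0)$ and hence locally absolutely continuous, which justifies
\[
\log \wasp(s) - \log \wasp(r) = \int_r^s \frac{\wasp'(t)}{\wasp(t)}\,dt \le C' \int_r^s \frac{dt}{1-t} = C' \log \frac{1-r}{1-s}
\]
for $0 \le r < s < 1$. Specialising to $r = 1 - 2^{-n}$ and $s = 1 - 2^{-n-1}$ gives $\wasp(1-2^{-n-1}) \le 2^{C'} \wasp(1-2^{-n})$. Invoking $\wasp \asymp \we$ once more translates this into $\we(1-2^{-n-1}) \le d\, \we(1-2^{-n})$ for some $d$ depending only on $C'$ and the equivalence constant, which is exactly the doubling property.

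The main obstacle I would anticipate is the justification of the integration step: the right derivative $\wasp'$ is a priori only defined pointwise, and in general one cannot integrate it to recover function values. Here, however, the log-convexity of $\wasp$ (which comes for free as a pointwise supremum of log-convex Hardy means, as noted in Section~\ref{s_assoc}) ensures that $\log \wasp$ is locally absolutely continuous in $\log r$, which is precisely what is needed. Once that is in place, the remaining work is routine bookkeeping of constants.
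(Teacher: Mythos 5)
Your argument is correct and takes essentially the same route as the paper: apply Proposition~\ref{p_intoder}, use $\wsecasp\le\wsec=\we/(1-r)$ together with Proposition~\ref{p_logconv_ass} to get the differential inequality $\wasp^\prime(r)(1-r)\le C\wasp(r)$, and then pass to the dyadic ratio bound and back to $\we$ by the same equivalence. The only difference is that the paper invokes Lemma~2.6 of \cite{AT17} for the step from the differential inequality to the doubling ratio, whereas you integrate directly, correctly justifying this via the local absolute continuity of $\log\wasp$ coming from its log-convexity.
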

\begin{proof}
By Proposition~\ref{p_intoder},
\[
\wasp^\prime(r) \le C \left( \frac{\we(r)}{1-r}\right)^\sim_p \le C \frac{\we(r)}{1-r}, \quad 0\le r <1.
\]
By Proposition~\ref{p_logconv_ass}, $\we\le C\wasp$. Therefore,
\[
\frac{\wasp^\prime(r) (1-r)}{\wasp(r)} \le C, \quad 0\le r <1.
\]
Applying Lemma~2.6 from \cite{AT17} to $\wasp$, we obtain
\[
\frac{\wasp(1-2^{-n-1})}{\wasp(1-2^{-n})} \le C, \quad n=0,1,2,\dots.
\]
Finally, by Proposition~\ref{p_logconv_ass}, the above property holds with $\we$ in the place of $\wasp$;
in other words, $\we$ is doubling.
\end{proof}

For the log-convex weights and $1\le p<\infty$, combining Propositions~\ref{p_suff_disk} and \ref{p_ness_disk},
we obtain the following characterization.

\begin{theorem}\label{t_iff_disk}
  Let $1\le p<\infty$ and let $\we$ be a log-convex weight on $[0,1)$. Put $\wsec(r) = \we(r)/(1-r)$.
  The differentiation operator
  $\diff: \haw^p(\Dbb)\to H_\wsec^p(\Dbb)$ is bounded if and only if $\we$ is doubling.
\end{theorem}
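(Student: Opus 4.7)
The plan is to read off Theorem~\ref{t_iff_disk} as a direct combination of Propositions~\ref{p_suff_disk} and \ref{p_ness_disk}, which together exhaust the two implications under the standing hypotheses that $1\le p<\infty$ and $\we$ is log-convex on $[0,1)$. Both results have already been established in the section, so the proof will amount to aligning their hypotheses with those of the theorem and quoting them.

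First I would address the sufficiency direction. Assuming $\we$ is doubling, I would apply Proposition~\ref{p_suff_disk} with the given $\wsec(r)=\we(r)/(1-r)$; this immediately gives boundedness of $\diff:\haw^p(\Dbb)\to H_\wsec^p(\Dbb)$. It is worth emphasizing in passing that this half of the equivalence uses neither log-convexity of $\we$ nor the restriction $p\ge 1$, and in fact holds for all $0<p<\infty$.

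Next I would treat the necessity direction. Assuming $\diff:\haw^p(\Dbb)\to H_\wsec^p(\Dbb)$ is bounded with $\we(r)=(1-r)\wsec(r)$, I would invoke Proposition~\ref{p_ness_disk}, whose hypotheses (log-convex $\we$, $1\le p<\infty$, and the prescribed relation between $\we$ and $\wsec$) match the present ones exactly; the conclusion that $\we$ is doubling follows at once.

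There is essentially no technical obstacle here, since the real work has already been done upstream: the delicate ingredients—Proposition~\ref{p_intoder} giving the derivative bound $\wasp^\prime\le C\,\wsecasp$, the two-sided control $\we\asymp \wasp$ from Proposition~\ref{p_logconv_ass} for log-convex $\we$, and the quoted Lemma~2.6 of \cite{AT17}—all sit inside Proposition~\ref{p_ness_disk}. Accordingly, my write-up would be a short two-sentence paragraph explicitly citing the two propositions and noting the asymmetry in their hypotheses ($p\ge 1$ and log-convexity are needed only for necessity).
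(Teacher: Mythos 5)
Your proposal is correct and coincides with the paper's own proof: the theorem is stated there precisely as the combination of Propositions~\ref{p_suff_disk} and \ref{p_ness_disk}, with the sufficiency direction valid for all $0<p<\infty$ and the necessity direction using log-convexity and $p\ge 1$. Nothing further is needed.
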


\begin{cory}\label{c_iff_disk}
Let $\we$ be a log-convex weight on $[0,1)$. Put $\wsec(r) = \we(r)/(1-r)$.
If the differentiation operator
  $\diff: \haw^p(\Dbb)\to H_\wsec^p(\Dbb)$ is bounded for some $1\le p \le\infty$,
  then $\diff: \haw^p(\Dbb)\to H_\wsec^p(\Dbb)$ is bounded for all $1\le p \le\infty$.
\end{cory}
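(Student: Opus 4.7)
The plan is to exploit the fact that Theorem~\ref{t_iff_disk} characterizes boundedness of $\diff\colon\haw^p(\Dbb)\to H_\wsec^p(\Dbb)$ for each $p\in[1,\infty)$ by a single condition on $\we$ alone, namely that $\we$ be doubling, with no dependence on $p$. Consequently, all the finite-$p$ cases are already mutually equivalent to one another. To reach the corollary it therefore suffices to prove that, for log-convex $\we$, boundedness at $p=\infty$ is also equivalent to $\we$ being doubling; since this is essentially the $p=\infty$ material of \cite{AT17} that the present paper is extending, the work amounts to assembling the right ingredients.

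For the necessary direction at $p=\infty$, I would follow the argument of Proposition~\ref{p_ness_disk} verbatim with $p=\infty$ in place of $p$. The $p=\infty$ analogue of Proposition~\ref{p_intoder}, which is \cite[Corollary~2.2]{AT17}, gives $\wass^\prime(r)\le C\wsec(r)=C\we(r)/(1-r)$. The $p=\infty$ version of Proposition~\ref{p_logconv_ass}, that is, the well-known estimate $\we\asymp\wass$ for log-convex $\we$ on $[0,1)$, then converts this into $\wass^\prime(r)(1-r)/\wass(r)\le C$. Applying \cite[Lemma~2.6]{AT17} to $\wass$ yields that $\wass$ is doubling, and $\we\asymp\wass$ transfers the doubling property back to $\we$.

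For the sufficient direction at $p=\infty$, I would run the scheme of Proposition~\ref{p_suff_disk}: the doubling hypothesis on $\we$ gives $\we((1+r)/2)\le C(1-r)\wsec(r)$, and then the $p=\infty$ form of Lemma~\ref{l_imeans}, which is the elementary Cauchy estimate $\mm_\infty(f^\prime,r)\le C(R-r)^{-1}\max_{0<t<R}\mm_\infty(f,t)$, applied with $R=(1+r)/2$, immediately produces boundedness of $\diff$ into $H_\wsec^\infty(\Dbb)$. Combining both directions with Theorem~\ref{t_iff_disk}, boundedness of $\diff\colon\haw^p(\Dbb)\to H_\wsec^p(\Dbb)$ is equivalent to $\we$ being doubling for every $p\in[1,\infty]$, which is the conclusion of the corollary.

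There is no real technical obstacle: every ingredient is either already proved in the paper or recorded in \cite{AT17}. The only point to watch is that the quoted $p=\infty$ results of \cite{AT17} are cited in the exact form needed; once the $p=\infty$ equivalence with the doubling condition is recorded, the corollary falls out because the doubling property is visibly independent of $p$.
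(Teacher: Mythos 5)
Your argument is correct, and its core is the same as the paper's: reduce boundedness for every $p\in[1,\infty]$ to the single $p$-independent condition that $\we$ be doubling, using Theorem~\ref{t_iff_disk} for $1\le p<\infty$. The only divergence is at the endpoint $p=\infty$: the paper's proof is a one-line citation of Theorem~2.8 from \cite{AT17}, which already states the $p=\infty$ characterization (boundedness of $\diff\colon H_\we^\infty(\Dbb)\to H_\wsec^\infty(\Dbb)$ if and only if $\we$ is doubling, for log-convex $\we$), whereas you reconstruct that characterization by running the arguments of Propositions~\ref{p_ness_disk} and~\ref{p_suff_disk} with $p=\infty$, i.e.\ \cite[Corollary~2.2]{AT17} plus \cite[Lemma~2.6]{AT17} and the essentiality $\we\asymp\wass$ of log-convex weights for the necessity, and the Cauchy estimate with $R=(1+r)/2$ for the sufficiency. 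This re-derivation is sound (note that $\wass$ is log-convex by Hadamard's three-circles theorem, so \cite[Lemma~2.6]{AT17} applies to it, and $\we\asymp\wass$ for log-convex $\we$ on $[0,1)$ follows, e.g., from \cite[Proposition~1.3]{Dou15} as in Remark~\ref{r_at_disk_bdd}); it buys a more self-contained proof at the cost of redoing what \cite{AT17} already records, while the paper simply quotes the ready-made $p=\infty$ theorem.
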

\begin{proof}
We apply Theorem~2.8 from \cite{AT17} and Theorem~\ref{t_iff_disk}.
\end{proof}

\subsection{Bounded differentiation operator on $\haw^p(\Cbb)$}
\begin{prop}\label{p_suff_plane_lc}
Let $0<p<\infty$ and let $\we$ on $[0, +\infty)$ be a log-convex weight such that
$\we^\prime(r) \le C\we(r)$, $1\le r <\infty$.
Then the differentiation operator is bounded on $\haw^p(\Cbb)$.
\end{prop}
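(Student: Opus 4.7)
The plan is to reduce the statement to Corollary~\ref{c_suff_plane} (applied with $\wsec=\we$). It suffices to verify the one-step doubling estimate
\[
\we(1+r) \le C \we(r), \quad 0\le r < +\infty.
\]

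For $r \ge 1$, I would divide the hypothesis $\we'(r) \le C \we(r)$ by $\we(r) > 0$ to obtain $(\log\we)'(r) \le C$. A log-convex weight is convex after the substitution $r=e^x$, hence locally absolutely continuous on $(0,+\infty)$, so $\log\we$ is the integral of its (right) derivative on $[r,r+1]$. Integrating the bound over $[r,r+1]$ yields $\log\we(r+1)-\log\we(r) \le C$, i.e., $\we(r+1) \le e^C \we(r)$ for $r\ge 1$.

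For $0\le r \le 1$, the function $\we$ is continuous and strictly positive on the compact set $[0,2]$, so $\we(1+r)/\we(r) \le \we(2)/\we(0)$. Combining the two ranges gives the desired uniform bound with some constant $C'$, and then Corollary~\ref{c_suff_plane} with $\wsec=\we$ concludes that $\diff:\haw^p(\Cbb)\to\haw^p(\Cbb)$ is bounded for every $0<p<\infty$.

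There is no substantive obstacle here; the only small care needed is justifying that $\we$ is regular enough for the fundamental theorem of calculus to apply to $\log\we$, which follows from log-convexity, and handling the small-$r$ range by continuity before splicing the two estimates together.
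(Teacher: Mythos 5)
Your argument is correct and follows essentially the same route as the paper: both reduce the statement to the one-step estimate $\we(r+1)\le C\we(r)$ and then invoke Corollary~\ref{c_suff_plane} with $\wsec=\we$, the only difference being that you integrate $(\log\we)'\le C$ directly over $[r,r+1]$ while the paper runs the same computation through the convexity of $\Phi(x)=\log\we(e^x)$ on $[\log r,\log(r+1)]$. Your explicit treatment of the range $0\le r\le 1$ by positivity and monotonicity is a detail the paper leaves implicit, and your regularity remark (local Lipschitzness of $\log\we$ from log-convexity) correctly justifies the integration step.
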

\begin{proof}
The hypothesis of the proposition guarantees that $\Phi^\prime(x) = (\log\we(e^x))^\prime \le C e^x$, $x\in\Rbb$.
So, considering the interval $[\log r, \log(r+1)]$ and using the convexity of $\Phi$, we obtain
\[
\log\frac{\we(r+1)}{\we(r)} \le C (r+1) \log\frac{r+1}{r} \le C, \quad r\ge 1.
\]
Hence, $\we(r+1)\le C\we(r)$, $r\ge 0$.
It remains to apply Corollary~\ref{c_suff_plane}.
\end{proof}

So, we have an explicit sufficient growth condition for arbitrary weights.

\begin{cory}\label{c_sufflog_plane}
Let $0< p <+\infty$ and let $\we$ be a weight on $[0, +\infty)$.
If $\log\we(r) \le Cr$, $1\le r <+\infty$, then the differentiation operator is bounded on $\haw^p(\Cbb)$.
\end{cory}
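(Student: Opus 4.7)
The plan is to pass to the $p$-associated weight and then invoke Proposition~\ref{p_suff_plane_lc}. First I would use the isometric identity $H_\we^p(\Cbb) = H_{\wasp}^p(\Cbb)$ from Section~\ref{s_assoc}: it is enough to show that $D$ is bounded on $H_{\wasp}^p(\Cbb)$. By construction $\wasp$ is log-convex, and the pointwise inequality $\wasp \le \we$ (obtained by testing against $f \equiv 1$ in the definition of $\wasp$ applied to the weight $\we$, or directly from $H_\we^p = H_{\wasp}^p$) transports the hypothesis into $\log \wasp(r) \le Cr$ for $r \ge 1$.

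Next I would verify that $\wasp$ meets the remaining assumption of Proposition~\ref{p_suff_plane_lc}, namely $\wasp^\prime(r) \le C^\prime \wasp(r)$ for $r \ge 1$, where the prime denotes the right derivative. After switching to the logarithmic transform $\Phi(x) = \log \wasp(e^x)$, this becomes $\Phi^\prime(x) \le C^\prime e^x$ for $x \ge 0$. Since $\wasp$ is log-convex and non-decreasing, $\Phi$ is convex and non-decreasing on $\Rbb$, so the standard convexity estimate $\Phi^\prime(x) \le \Phi(x+1) - \Phi(x)$ is available; combining it with $\Phi(x+1) \le C e^{x+1}$, which is exactly the transported growth hypothesis, and the trivial lower bound $\Phi(x) \ge \log \wasp(0)$, I get $\Phi^\prime(x) \le C^\prime e^x$ for $x \ge 0$, as required. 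Proposition~\ref{p_suff_plane_lc} then finishes the argument.

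The main obstacle is precisely the step where a bound on the derivative of $\wasp$ has to be extracted from a mere pointwise bound on $\log \wasp$. For an arbitrary weight $\we$ satisfying $\log\we(r)\le Cr$ one cannot apply Corollary~\ref{c_suff_plane} directly, since $\we(r+1)/\we(r)$ need not be bounded under the hypothesis; it is the log-convexity of the $p$-associated weight that turns a pointwise growth bound into control of the derivative, which is why the reduction to $\wasp$ is indispensable.
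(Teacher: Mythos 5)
Your argument is correct and follows the paper's route: reduce to the log-convex $p$-associated weight $\wasp$ via the isometry $\haw^p(\Cbb)=H_{\wasp}^p(\Cbb)$, transfer the growth hypothesis through $\wasp\le\we$, and then apply Proposition~\ref{p_suff_plane_lc}. The only real difference is that where the paper cites \cite[Theorem~2.10]{AT17} to pass from $\log\wasp(r)\le Cr$ to the derivative bound $\wasp^\prime(r)\le C\wasp(r)$, you prove that implication directly from convexity via $\Phi^\prime(x)\le\Phi(x+1)-\Phi(x)$ for $\Phi(x)=\log\wasp(e^x)$; this is a valid, self-contained substitute for the citation. One small quibble: the inequality $\wasp\le\we$ is immediate because every competitor $f$ in the definition of $\wasp$ satisfies $\mm_p(f,t)\le\we(t)$ for all $t$; ``testing against $f\equiv 1$'' would only give a lower bound for $\wasp$, not this upper bound, so your parenthetical justification is off even though the claim itself is trivially true.
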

\begin{proof}
  The weight $\wasp$ is log-convex and $\log\wasp(r) \le \log\we(r) \le Cr$, $1\le r <+\infty$.
  By \cite[Theorem~2.10]{AT17}, $\log\wasp(r) \le Cr$ implies $\wasp^\prime(r) \le C\we(r)$.
  Since $\haw^p(\Cbb)=H^p_{\wasp}(\Cbb)$ isometrically, it suffices to apply Proposition~\ref{p_suff_plane_lc}
  to $\wasp$.
\end{proof}

\begin{prop}\label{p_ness_plane_lc}
Let $1\le p<\infty$ and let $\we$ be a log-convex weight on $[0, +\infty)$.
Assume that the differentiation operator is bounded on $\haw^p(\Cbb)$.
Then there exists a constant $C>0$ such that
\[
\log\we(r) \le Cr, \quad 1\le r <+\infty.
\]
\end{prop}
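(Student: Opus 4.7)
The plan is to combine the necessary condition of Proposition~\ref{p_intoder} with the regularity result of Proposition~\ref{p_logconv_rw}. Since differentiation is bounded on $\haw^p(\Cbb)$, we read Proposition~\ref{p_intoder} with $\wsec=\we$ and $1\le p<\infty$; it yields
\[
\wasp^\prime(r) \le C\, \wasp(r), \quad 0\le r <+\infty,
\]
where $\wasp^\prime$ is the right derivative. The goal is then to upgrade this differential inequality to the pointwise growth estimate on $\we$ itself.

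The first step is to integrate the differential inequality. Because $\wasp$ is log-convex, $\log\wasp(e^x)$ is convex in $x$, hence $\log\wasp(r)$ is locally absolutely continuous in $r$ with almost-everywhere derivative equal to its right derivative. Dividing through by $\wasp(r)>0$ and integrating from $1$ to $r$, I get
\[
\log\wasp(r) - \log\wasp(1) \le C(r-1),
\]
so $\log\wasp(r) \le C_1 r$ for all $r\ge 1$, after absorbing constants.

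The second step transfers this bound from $\wasp$ to $\we$. Since $\we$ is log-convex on $[0,+\infty)$, Proposition~\ref{p_logconv_rw} gives a constant $C_2>0$ such that $\we(r) \le C_2 (r+1)\wasp(r)$ for all $r\ge 0$. Taking logarithms,
\[
\log \we(r) \le \log C_2 + \log(r+1) + \log\wasp(r) \le \log C_2 + \log(r+1) + C_1 r
\]
for $r\ge 1$. Since $\log(r+1)\le r$ on $[1,+\infty)$, the right-hand side is bounded by $C r$ for a suitable constant $C>0$, which is the required estimate.

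The main technical point to be careful about is that Proposition~\ref{p_intoder} delivers only the right-derivative inequality, so one must invoke the absolute continuity of log-convex functions before dividing by $\wasp$ and integrating; otherwise the steps are direct applications of the results established earlier in Sections~\ref{s_assoc} and \ref{s_basic}.
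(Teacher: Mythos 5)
Your argument is correct and follows essentially the same route as the paper: apply Proposition~\ref{p_intoder} with $\wsec=\we$ to get $\wasp^\prime(r)\le C\wasp(r)$, integrate the resulting bound on $(\log\wasp)^\prime$, and then transfer the estimate from $\wasp$ to $\we$ via Proposition~\ref{p_logconv_rw}. Your extra remark justifying the integration step through the local absolute continuity of the convex function $\log\wasp$ is a welcome refinement of a point the paper leaves implicit.
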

\begin{proof}
  By Proposition~\ref{p_intoder},
\[
(\log\wasp)^\prime(r) = \frac{\wasp^\prime(r)}{\wasp(r)} \le C, \quad 0\le r <+\infty.
\]
Integrating the above estimate, we obtain $\log\wasp(r) \le C(r+1)$.
Since $\we$ is log-convex, Proposition~\ref{p_logconv_rw} guarantees that $\we(r) \le C (r+1)\wasp(r)$.
Thus, $\log\we(r) \le C(r+1) + \log C(r+1) \le C(r+1)$, $0\le r < +\infty$, as required.
\end{proof}

Therefore, we have the following characterization for arbitrary weights.

\begin{theorem}\label{t_iff_plane}
Let $1\le p <+\infty$ and let $\we$ be a weight on $[0, +\infty)$.
The following properties are equivalent:
\begin{itemize}
  \item[(i)] $\log\wasp(r) \le Cr$, $1\le r <\infty$;
  \item[(ii)] $\wasp^\prime(r) \le C\we(r)$, $1\le r <\infty$;
  \item[(iii)] The differentiation operator is bounded on $\haw^p(\Cbb)$.
\end{itemize}
\end{theorem}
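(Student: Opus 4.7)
My plan is to derive the three-way equivalence by applying the preceding log-convex results to the $p$-associated weight $\wasp$, which is log-convex (Section~\ref{s_assoc}) and satisfies $\haw^p(\Cbb) = H_{\wasp}^p(\Cbb)$ isometrically. Since $\wasp$ is its own $p$-associated weight, all three properties are really statements about $\wasp$, and the log-convex theorems of Section~\ref{s_special} apply directly.

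For (i)~$\Leftrightarrow$~(iii) I would apply Corollary~\ref{c_sufflog_plane} with $\wasp$ in place of $\we$ to obtain (i)~$\Rightarrow$~(iii), and Proposition~\ref{p_ness_plane_lc} applied to the log-convex weight $\wasp$ (boundedness of $\diff$ on $H_{\wasp}^p(\Cbb)$ coming for free from (iii) via the isometric identification) to obtain (iii)~$\Rightarrow$~(i). For (i)~$\Leftrightarrow$~(ii) I would invoke \cite[Theorem~2.10]{AT17}, which for a log-convex weight $v$ asserts the equivalence $\log v(r) \le Cr \Leftrightarrow v'(r) \le Cv(r)$ on $[1,\infty)$; applied to $v = \wasp$ and combined with the pointwise bound $\wasp \le \we$, this immediately yields (i)~$\Rightarrow$~(ii).

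The main obstacle will be (ii)~$\Rightarrow$~(i), because (ii) is formally weaker than $\wasp'(r) \le C\wasp(r)$. To close the loop I plan to route through (iii): starting from $\wasp'(r) \le C\we(r)$, integrate over $[r,r+1]$ and use the log-convex structure of $\wasp$ (namely, $r\wasp'(r)/\wasp(r)$ is nondecreasing) to derive a doubling estimate $\wasp(r+1) \le C\wasp(r)$; Corollary~\ref{c_suff_plane} applied to $\wasp$ then yields $\diff$ bounded on $H_{\wasp}^p(\Cbb) = \haw^p(\Cbb)$, which is (iii), and then (iii)~$\Rightarrow$~(i) by the argument above closes the loop. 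The delicate step will be extracting the doubling estimate using only the integrated information from (ii) together with log-convexity.
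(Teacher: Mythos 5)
Three of the four implications you set up are fine and coincide with the paper's own reductions: everything is transferred to the log-convex weight $\wasp$ via $H_{\wasp}^p(\Cbb)=\haw^p(\Cbb)$ and the fact that $\wasp$ is its own $p$-associated weight, and then (i)$\Rightarrow$(iii) follows from Corollary~\ref{c_sufflog_plane} (equivalently Proposition~\ref{p_suff_plane_lc}), (iii)$\Rightarrow$(i) from Proposition~\ref{p_ness_plane_lc} applied to $\wasp$, and (i)$\Rightarrow$(ii) from \cite[Theorem~2.10]{AT17} together with $\wasp\le\we$. The genuine gap is exactly where you flag it, in (ii)$\Rightarrow$(i) (equivalently (ii)$\Rightarrow$(iii)), and the mechanism you sketch does not close it. Integrating $\wasp'\le C\we$ over $[r,r+1]$ only gives $\wasp(r+1)\le\wasp(r)+C\we(r+1)$, and for a general (non-log-convex) weight $\we$ there is no reverse comparison of $\we$ with $\wasp$: $\we$ may exceed $\wasp$ by an arbitrarily large factor throughout $[r,r+1]$, since $\wasp$ is really governed by the log-convex minorant of $\we$. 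The only reverse estimate in the paper, Proposition~\ref{p_logconv_rw}, requires $\we$ log-convex and loses a factor $r+1$, which is fatal here: plugged into (ii) it would only yield $(\log\wasp)'(r)\le C(r+1)$, i.e.\ $\log\wasp(r)\lesssim r^2$. The monotonicity of $r\wasp'(r)/\wasp(r)$ cannot bridge this either, since it bounds $\wasp'$ at smaller radii by its values at larger radii, whereas you need an upper bound for $\wasp'$ on $[r,r+1]$ in terms of $\wasp(r)$. Note moreover that for a log-convex weight the doubling estimate $\wasp(r+1)\le C\wasp(r)$ you aim for is \emph{equivalent} to (i) (iterate it to get $\log\wasp(n)\le n\log C+\log\wasp(1)$; conversely (i) yields it as in the proof of Proposition~\ref{p_suff_plane_lc}), so the ``delicate step'' is the entire content of the implication, and no argument is actually offered for it.

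For comparison, the paper closes the cycle as (i)$\Rightarrow$(ii)$\Rightarrow$(iii)$\Rightarrow$(i), deducing (ii)$\Rightarrow$(iii) by applying Proposition~\ref{p_suff_plane_lc} to $\wasp$; that application uses the hypothesis in the upgraded form $\wasp'(r)\le C\wasp(r)$. So what your route (and any route through Corollary~\ref{c_suff_plane}) really requires is precisely this upgrade: a proof that $\wasp'\le C\we$ forces $\wasp'\le C'\wasp$, or directly $\log\wasp(r)\le Cr$. Such an argument has to exploit the radii at which $\we$ is genuinely comparable to $\wasp$ --- for instance the touch points of the log-convex/monomial envelope of $\we$, as in the proof of Proposition~\ref{p_logconv_rw}, where (ii) does give $\wasp'\le C\wasp$, combined with the monotonicity of $r\wasp'/\wasp$ to propagate the bound between such radii --- because the hypothesis (ii) carries no information at radii where $\we\gg\wasp$. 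Merely combining $\wasp\le\we$ with log-convexity of $\wasp$, as in your sketch, cannot produce it.
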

\begin{proof}
The weight $\wasp$ is log-convex, hence, (i) implies (ii)
by \cite[Theorem~2.10]{AT17}.
Also, (ii)$\Rightarrow$(iii)$\Rightarrow$(i) by Propositions~\ref{p_suff_plane_lc} and \ref{p_ness_plane_lc},
respectively.
\end{proof}

\begin{rem}
If $\we$ is a log-convex weight, then clearly
Theorem~\ref{t_iff_plane} holds with
$\we$ in the place of $\wasp$ in properties (i) and (ii).
\end{rem}

\end{document}